\theoremstyle{plain}
\newtheorem{thm}{Theorem}[section]
\newtheorem{thm*}{Theorem}[section]
\newtheorem{prop}[thm]{Proposition}
\newtheorem{lemma}[thm]{Lemma}
\newtheorem{lemma*}{Lemma}
\theoremstyle{definition}
\newtheorem{defn}[thm]{Definition}
\newtheorem{question}[thm] {Question}
\newtheorem*{remark*}{Remark}
\newtheorem{ex}[thm]{Example}
\newtheorem{question*}{Question}
\numberwithin{equation}{thm}
\newcommand{\bM}{\mathbb M}
\newcommand{\cM}{\mathcal M}
\newcommand{\cN}{\mathcal N}
\newcommand{\bG}{\mathbb G}
\newcommand{\cO}{\mathcal O}
\newcommand{\bP}{\mathbb P}
\newcommand{\bZ}{\mathbb Z}
\newcommand{\bF}{\mathbb F}
\newcommand{\cC}{\mathcal C}
\newcommand{\cE}{\mathcal E}
\newcommand{\fg}{\mathfrak g}
\newcommand{\fm}{\mathfrak m}
\newcommand{\ul}{\underline}
\def\sl2{\operatorname{SL_{2(2)}}\nolimits}
\def\Ga2{\operatorname{\mathbb G_{\rm a(2)}}\nolimits}
\def\GL{\operatorname{GL}\nolimits}
\newcommand{\bH}{\mathbb H}
\newcommand{\bN}{\mathbb N}
\newcommand{\bU}{\mathbb U}
\date\today
\begin{document}

 \title[Filtrations and Growth of $\bG$-modules ]{Filtrations and Growth of $\bG$-modules}
 
 \author[ Eric M. Friedlander]
{Eric M. Friedlander$^{*}$} 

\address {Department of Mathematics, University of Southern California,
Los Angeles, CA 90089}
\email{ericmf@usc.edu}

\thanks{$^{*}$ partially supported by the Simons Foundation }

\subjclass[2010]{20G05, 20C20, 20G10}

\keywords{filtrations, coalgebras, representations}

\begin{abstract}
We investigate infinite dimensional modules for an affine group scheme
$\mathbb G$ of finite type over a field of positive characteristic $p$. 
For any subspace $X \subset \mathcal O(\mathbb G)$ of the coordinate algebra
of $\bG$, we consider the abelian subcategory
$Mod(\mathbb G,X) \subset Mod(\mathbb G)$ of ``$X$-comodules" and the left exact functor
$(-)_X: Mod(\mathbb G) \to Mod(\mathbb G,X)$ which is right adjoint to the inclusion functor.
We employ ``ascending converging sequences" $\{ X_i \}$ of subspaces of $\mathcal O(\mathbb G)$
to provide functorial filtrations $\{ M_{X_i }\}$ of each
$\mathbb G$-module $M$.   A $\mathbb G$-module $M$ is injective if and only if each $M_{X_i}$
is an injective $X_i$-comodule for some (or, equivalently, for all) such $\{ X_i \}$.

 We consider the explicit ascending converging sequence $ \{ \cO(\bG)_{\leq d,\phi} \}$
of finite dimensional subcoalgebras of $\cO(\bG)$ depending upon a
closed embedding $\phi: \bG \ \hookrightarrow \ GL_N$.  Of particular interest
to us are mock injective $\bG$-modules, modules whose support varieties are empty.
Restrictions of a $\bG$-module to each $\cO(\bG)_{\leq d,\phi}$ provide new invariants 
for $\bG$-modules.  For cofinite $\mathbb G$-modules $M$, we explore the 
the growth of $d \mapsto M_{\cO(\bG)_{\leq d,\phi}}$.
\end{abstract}

\maketitle


\section{Introduction}

One approach to studying a $\bG$-module $M$ for a connected affine group scheme $\bG$
over a field $k$ of characteristic $p > 0$ is to investigate the restriction of $M$ to Frobenius 
kernels $\bG_{(r)}, r > 0$ of $\bG$.   From some
points of view, the representation theory of finite group schemes such as $\bG_{(r)}$ resembles
the representation theory of finite groups and thus shares many useful properties.   The technique
of restricting $\bG$-modules to Frobenius kernels has been effective in studying irreducible modules
and standard finite dimensional modules especially when combined with the technique of reducing
representations in characteristic 0 to characteristic $p$ (see, for example, \cite{J}).   In recent years,
the consideration of support varieties for elementary abelian 
$p$-groups (see \cite{C}) has been vastly generalized to modules for finite group schemes (see, for example, 
\cite{FP2}), linear algebraic groups (see, for example, \cite{F15}), and various finite dimensional 
algebras (see, for example, \cite{BIKP}, \cite{F-N2}, \cite{NP}).   

However, some aspects of the representation theory of $\bG$ are not seen by restrictions to
$\bG_{(r)}$.   Of particular importance are proper mock injective modules, modules which are not injective
as $\bG$-modules but whose restrictions to every Frobenius kernel $\bG_{(r)}$ are injective \cite{F18}.
Using the lens of ``stable categories", we showed in \cite{F23} that localizing with 
respect to the (triangulated) category of mock injectives enables an analogue for linear algebraic groups
of stable module categories for finite group schemes. 

In \cite{F18}, we considered linear algebraic groups of exponential type and utilized
the filtration of a $\bG$-module $M$ determined by the filtration of $M$ by exponential degree. 
This suggested that studying $\bG$-modules for more general affine group schemes using 
restrictions of their coactions to ($k$-linear) subspaces of $\cO(\bG)$ (the coordinate algebra
of $\bG$) should constructively complement the more familiar technique of studying a 
$\bG$-module $M$ by considering its restriction to finite subgroup schemes.
A classic example of this approach is the consideration of the subcoalgebra
$\cO(\bM_{N,N})_d \hookrightarrow \cO(GL_N)$ consisting of polynomials of degree $d$ 
in the coordinate functions $x_{i,j} \in \cO(GL_N)$; the $k$-linear dual of $\cO(\bM_{N,N})_d$
is a classical Schur algebra (see \cite{Green}).

In Theorem \ref{thm:M-X}, we associate 
to an arbitrary subspace (i.e., an arbitrary $k$-vector subspace) $X \subset \cO(\bG)$ an abelian subcategory 
$i_X: Mod(\bG,X) \hookrightarrow Mod(\bG)$ of the abelian category of $\bG$-modules
 together with a left exact functor $(-)_X: Mod(\bG) \ \to \ Mod(\bG,X)$ which is right adjoint to $i_X$.
 Essentially by construction, $M_X$ is the largest $\bG$-submodule of $M$ whose coaction
 $\Delta_M: M \to M\otimes \cO(\bG)$ factors through $M\otimes X \subset M\otimes \cO(\bG)$.
In Proposition \ref{prop:X-injectivity}, we show for any ascending converging sequence $\{ X_n \}$
of subspaces of $\cO(\bG)$,\
$ X_0 \subset X_1 \subset \cdots X_n \subset \cdots \subset \ \bigcup_d X_n = \cO(\bG),$ \
and any $\bG$-module $M$ that
$M$  is injective if and only if each $M_{X_n}$ is injective in $Mod(\bG,X_n)$.  
In Proposition \ref{prop:exp-degree}, we utilize the ascending, converging sequence of possibly infinite dimensional 
subspaces $\{ \cE(d) \}$ of $\cO(\bG)$ in order to correct the formulation given in \cite{F18} of the filtration 
by exponential degree for $\bG$-modules for a linear algebraic group of exponential type.   This correction 
adds the condition that the filtration be of $\bG$-modules.

If a given subspace $X \subset \cO(\bG)$ is the underlying subspace of a subcoalgebra $C$,
then $Mod(\bG,X) \ \subset \ Mod(\bG)$ is naturally identified with $CoMod(C)$, the abelian category
of comodules for $C$.
For a given subspace $X \subset \cO(\bG)$, we consider the smallest subcoalgebra 
$\cO(\bG)_{\langle X \rangle} \subset \cO(\bG)$
containing $X$; if $X$ is finite dimensional, then so is $\cO(\bG)_{\langle X \rangle}$.   Given an ascending 
converging sequence $\{ X_n \}$ of subspaces of $\cO(\bG)$ with associated sequence
$\{\cO(\bG)_{\langle X \rangle} \}$ of subcoalgebras, the filtrations on a given $\bG$-module 
by $\{ X_n \}, \ \{\cO(\bG)_{\langle X \rangle} \}$
are related by Proposition \ref{prop:relate}.
In Definitiion \ref{defn:filt-GL}, we provide another construction of ascending converging 
finite dimensional subcoalgebras $\{ \cO(\bG)_{\leq d,\phi} \}$ of $\cO(\bG)$ 
by first explicitly defining $\{ \cO(GL_N)_{\leq d} \}$ and then using a closed embedding 
$\phi:\bG \hookrightarrow GL_N$.  As seen in Proposition \ref{prop:diff-emb}, different closed 
embedding determine ``comparable" filtrations of $\bG$-modules.

In the remainder of this paper, we investigate two classes of $\bG$-modules: the class of 
mock injective $\bG$-modules and the class of cofinite
$\bG$-modules.  Examples suggests that studying a $\bG$-module $M$
by investigating the growth of the submodules $M_{X_n}$  provides new invariants for $Mod(\bG)$.
As indicated in Section \ref{sec:questions}, many concrete questions remain to be answered.

Throughout this paper, the ground field $k$ is assumed to be of characteristic $p > 0$ for 
some prime $p$.  We use $\cO(\bG)$ to denote the coordinate algebra of $\bG$, and $k[\bG]^R$
(respectively, $k[\bG]^L$) the underlying vector space of $\cO(\bG)$ with the right (resp., left) regular representation.  For us, an affine group scheme is an affine group scheme over $k$ which is of
finite type over $k$; a linear algebraic group is a smooth and connected affine group scheme.  
A closed embedding $\bH \hookrightarrow \bG$ of affine group schemes will always mean an closed
immersion which is a morphism of group schemes.

We thank Bob Guralnick, Julia Pevtsova, Paul Sobaje, and especially Cris Negron for various helpful insights. 

\vskip .2in


\section{Filtrations by subspaces $X \subset \cO(\bG)$}
\label{sec:X-comodules}

For an affine group scheme $\bG$, we denote by $Mod(\bG)$ the
abelian category of $\bG$-modules; more precisely, the abelian category of 
comodules for $\cO(\bG)$ as a coalgebra over $k$.  Thus, $M \in Mod(\bG)$
is a vector space over $k$ equipped with a right coaction $\Delta_M: M \ \to \  M\otimes \cO(\bG)$:
a $k$-linear map determining natural (with respect to commutative $k$-algebras
$A$) $A$-linear group actions $\bG(A) \times (A \otimes M) \to (A\otimes M)$.  

\vskip .1in
\begin{defn}
\label{defn:X-comodule}
Let $i_X: X \subset \cO(\bG)$ be a subspace.
We define \ $Mod(\bG,X)$ \ to be the full subcategory of $Mod(\bG)$ whose objects
are those $\bG$-modules $M$ whose coaction $\Delta_M$ factors as
$(id_M \otimes i_X) \circ \Delta_{M,X}: M \to M\otimes X \to M \otimes \cO(\bG)$.

We refer to such $\bG$-modules as ``$X$-comodules".
\end{defn}

\vskip .1in

We utilize the following lemma investigating the closure properties of
$Mod(\bG,X) \ \subset Mod(\bG)$.

\begin{lemma}
\label{lem:ker-coker}
Let $X \subset \cO(\bG)$ be a subspace and let $M$ be an $X$-comodule.
\begin{enumerate}
\item
If $j: N \ \to \ M$ is an injective map of $\bG$-modules, then $N$ is also an $X$-comodule.
\item
If $q: M \to Q$ is a surjective map of $\bG$-modules, then  $Q$ is also an $X$-comodule.
\item
If $f: M \to N$ is a map of $\bG$-modules with $N$ an $X$-comodule, then 
the kernel and cokernel of $f$ are $X$-comodules.
\end{enumerate}
\end{lemma}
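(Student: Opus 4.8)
The plan is to reduce everything to two ingredients: the naturality of coactions under $\bG$-module maps (a comodule map commutes with the coactions), and the exactness of $(-)\otimes_k \cO(\bG)$ over the field $k$. Parts (1) and (2) carry all the content, and part (3) will then follow formally by applying them to the canonical maps $\Ker f \hookrightarrow M$ and $N \twoheadrightarrow \Coker f$.

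For part (1), I would start from the commuting square expressing that $j$ is a comodule map, namely $(j \otimes \id_{\cO(\bG)}) \circ \Delta_K = \Delta_M \circ j$. Since $M$ is an $X$-comodule, the right-hand composite lands in the subspace $M \otimes X$. The key step is the linear-algebra observation that the square with vertices $K \otimes X$, $K \otimes \cO(\bG)$, $M \otimes X$, $M \otimes \cO(\bG)$ is Cartesian; concretely, $(j \otimes \id)^{-1}(M \otimes X) = K \otimes X$. Granting this, $\Delta_K(K) \subseteq (j \otimes \id)^{-1}(M \otimes X) = K \otimes X$, so $\Delta_K$ factors through $K \otimes X$ and $K$ is an $X$-comodule.

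For part (2), the same naturality square now reads $(q \otimes \id) \circ \Delta_M = \Delta_Q \circ q$. Since $\Delta_M$ factors through $M \otimes X$ and $q \otimes \id_X$ maps $M \otimes X$ onto $Q \otimes X$ by surjectivity of $q$, the left-hand composite lands in $Q \otimes X$. As $q$ is surjective, every element of $Q$ has the form $q(m)$, whence $\Delta_Q(q(m)) = (q \otimes \id)\Delta_M(m) \in Q \otimes X$; thus $\Delta_Q$ factors through $Q \otimes X$, so $Q$ is an $X$-comodule.

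The only genuine content is the Cartesian-square claim used in (1), and this is exactly where working over a field matters. Choosing a basis of $X$ and extending it to a basis of $\cO(\bG)$, an element of $K \otimes \cO(\bG)$ has unique coordinates in $K$ along those basis vectors, and it lies in $K \otimes X$ precisely when its coordinates along the complementary basis vectors vanish; injectivity of $j$ then shows these vanishing conditions are unchanged by applying $j \otimes \id$, which is the asserted equality of preimages. With (1) and (2) in hand, part (3) is immediate: $\Ker f \hookrightarrow M$ makes $\Ker f$ an $X$-comodule by (1), and $N \twoheadrightarrow \Coker f$ makes $\Coker f$ an $X$-comodule by (2). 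I expect the main obstacle, such as it is, to be stating the Cartesian property cleanly; the remainder is routine diagram chasing.
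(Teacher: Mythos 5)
Your proposal is correct. Part (1) is essentially the paper's own argument in different packaging: the paper picks compatible bases for $K\subset M$ and $X\subset\cO(\bG)$ and observes that $\Delta(m)$ lies in $K\otimes\cO(\bG)\cap M\otimes X=K\otimes X$, which is precisely your Cartesian-square identity $(j\otimes\id)^{-1}(M\otimes X)=K\otimes X$; your basis verification of that identity is sound, and isolating it as the one place where working over a field matters is a fair assessment. Where you genuinely diverge is part (2). The paper's proof is heavier: it invokes part (1) for the kernel $K=\ker q$, chooses a basis of $M$ adapted to the splitting of $q$, uses that choice to \emph{define} a map $\Delta_{Q,X}\colon Q\to Q\otimes X$, and then runs a diagram chase to check it completes the diagram. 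You instead note that $\Delta_Q(q(m))=(q\otimes\id)\Delta_M(m)\in Q\otimes X$ by surjectivity of $q$ and naturality, so the image of $\Delta_Q$ lies in $Q\otimes X$ and the factorization is automatic, since $\id_Q\otimes i_X\colon Q\otimes X\to Q\otimes\cO(\bG)$ is injective over a field (a point worth stating explicitly, as it is what makes ``image containment'' equivalent to ``factors through''). This makes your (2) independent of (1), shorter, and free of basis choices; the paper's construction buys nothing extra here beyond an explicit formula for $\Delta_{Q,X}$, which is in any case forced by uniqueness. Part (3) is identical in both treatments.
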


\begin{proof}
To prove (1), choose a basis $\{ m_\beta, \beta \in I \}$ of $M$ such 
that a subset of this basis is a basis for $N$, and choose a basis $\{ f_\alpha, \alpha \in A \}$ 
of $\cO(\bG)$ such that a subset of this basis is a basis for $X$.
If $m \in M$ is an element of $N$, then $\Delta(m) = \sum_{\beta,j} a_{\beta,\alpha} m_\beta \otimes f_\alpha$
lies both in $N\otimes k[\bG]$ so that $a_{\beta,\alpha} = 0$  unless $m_\beta \in N$ and lies
in $M \otimes X$ so that $a_{\beta,\alpha} = 0$   unless $f_\alpha \in X$. Thus $\Delta(m) \in N\otimes X$
if $m \in N$.

To prove assertion (2), let $j: K \to M$ be the kernel of the surjective map $q: M \to Q$.
Using  assertion (1), we have
the commutative diagram 
\begin{equation}
\label{eqn:q}
\xymatrix{
K \ar[d]_j \ar[r]^-{\Delta_{K,X}} & K \otimes X\ar[r]^-{id_K \otimes i_X}   \ar[d]^{j\otimes id} & 
K \otimes \cO(\bG) \ar[d]^{j\otimes id_{\cO(\bG]}} \\
M \ar[d]_q \ar[r]^-{\Delta_{M,X}} & M \otimes X \ar[r]^-{id_M \otimes i_X}   \ar[d]^{q\otimes id_X} &
 M \otimes \cO(\bG) \ar[d]^{q\otimes id_{\cO(\bG)}} \\
Q  \ar@/_/[rr]_{\Delta_Q} & Q \otimes X   \ar[r]^-{id_Q \otimes i_X} & Q \otimes \cO(\bG).
} 
\end{equation} 
A simple diagram chase for (\ref{eqn:q}) implies that  
$\Delta_Q$ factors uniquely through $Q\otimes X$.

To prove (3), observe that the kernel  $ker\{ f \} \subset M $ is an $X$-comodule by (1) 
and that the quotient $N \twoheadrightarrow coker\{ f \}$ is an $X$-comodule by (2).
\end{proof}

\vskip .1in

We recall that the sum $M_1+M_2\  \subset \ M$ of $\bG$-submodules $M_1, M_2$ of $M$
is the image of $M_1 \oplus M_2 \to M$.

\begin{prop}
\label{prop:colimit}
Let $M$ be a $\bG$-module and $X \subset \cO(\bG)$ be a subspace.  
If $M_1 \subset M, \ M_2 \subset M$ are 
$\bG$-submodules which are $X$-comodules, then $M_1 + M_2 \subset M$
is also an $X$-comodule.
Thus, the category $\chi(M)$ whose objects are $X$-comodules of $M$
and whose maps are inclusions of $\bG$-submodules of $M$ is a filtering subcategory
of $Mod(\bG)$.

Consequently,
\begin{equation}
\label{eqn:bracket-X}
M_X\ \equiv \ \varinjlim_{N \in \chi(M)} N \quad = \ \bigcup_{N \in \chi(M)} N \ \subset \ M
\end{equation}
is well defined as a $\bG$-submodule.  Moreover, \ $M_X  \subset M$ \ is an $X$-comodule,
the largest $X$-comodule contained in $M$.
\end{prop}

\begin{proof}
Recall that $M_1 + M_2 \subset M$ fits in a short exact sequence of $\bG$-modules
\ $0 \to \ M_1\cap \ M_2 \ \to \ M_1\oplus M_2 \ \to \ M_1+M_2 \to 0$.
Since $M_1 \oplus M_2$ is clearly an $X$-comodule whenever $M_1, M_2$ are $X$-comodules,
the first assertion follows from Lemma \ref{lem:ker-coker}.

This implies that the category $\chi(M)$ is filtering;  given two objects $N_1 \subset M$ and $N_2 \subset M$
of $\chi(M)$, both map to $N_1+N_2 \subset M$ which is an object
of $\chi(M)$.   Thus, $\varinjlim_{N \in \chi(M)} N \ \to M$  
equals  the union \ $\bigcup_{N \in \chi(M)} N  \ \subset \ M$.    Recall that $(-) \otimes V$ for a 
given vector space $V$ commutes with filtered colimits of $k$-vector spaces.  Consequently, 
$$\varinjlim_{N \in \chi(M)} \Delta_N: \varinjlim_{N \in \chi(M)} N \ \to \ 
\varinjlim_{N \in \chi(M)} (N \otimes \cO(\bG]) \ = \ (\varinjlim_{N \in \chi(M)} N) \otimes \cO(\bG)$$ 
factors through 
$ \varinjlim_{N \in \chi(M)} N \to (\varinjlim_{N \in \chi(M)} N) \otimes X$.  In other wordss,
 $M_X$ is an $X$-comodule, the largest $X$-comodule contained in $M$.
\end{proof}

\vskip .1in

Theorem \ref{thm:M-X} introduces the functor $(-)_X: Mod(\bG) \quad \to \quad Mod(\bG,X)$
right adjoint to the natural embedding.

\begin{thm}
\label{thm:M-X}
Let $\bG$ be an affine group scheme of finite type over $k$ and let $i_X: X \subset \cO(\bG)$ be a subspace. 
Denote by  
$$i_{X*}: Mod(\bG,X) \quad \hookrightarrow \quad Mod(\bG)$$
the full subcategory of $Mod(\bG)$ whose objects are $X$-comodules.
\begin{enumerate}
\item
$Mod(\bG,X)$ is an abelian subcategory which is closed under filtering colimits.
\item
Sending a $\bG$-module $M$
to the $\bG$-submodule $M_X$ of $M$ as in (\ref{eqn:bracket-X}) determines a functor
$$(-)_X: Mod(\bG) \quad \to \quad Mod(\bG,X).$$ 
\item
$(-)_X$ is left exact and is right adjoint to the embedding functor $i_{X*}: Mod(\bG,X) \to Mod(\bG)$.
\end{enumerate}
\end{thm}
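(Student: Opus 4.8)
The plan is to reduce the statement to an explicit computation with coactions. First I would pass to the case that $M$ and $N$ are finite dimensional: every element of $(M\otimes N)_X$ lies in the (finite dimensional) $X$-subcomodule it generates, so by Lemma \ref{lem:colimit} it suffices to treat a finite dimensional $X$-subcomodule $W\subseteq M\otimes N$, and such a $W$ is contained in $M_0\otimes N_0$ where $M_0\subseteq M$ and $N_0\subseteq N$ are the subcomodules spanned by the first- and second-factor components of $W$. Since $M_X\cap M_0=(M_0)_X$ (an intersection of subcomodules is a subcomodule, and a subcomodule of the $X$-comodule $M_X$ is again an $X$-comodule by Lemma \ref{lem:ker-coker}(1)), and likewise for $N$, it is enough to prove the inclusion for $M_0,N_0$. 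Next I would replace $X$ by the largest subcoalgebra $\cO(\bG)_X\subseteq X$: the coefficient space of any $X$-comodule is a subcoalgebra contained in $X$, hence contained in $\cO(\bG)_X$, so $Mod(\bG,X)=Mod(\bG,\cO(\bG)_X)$ and $M_X=M_{\cO(\bG)_X}$. Writing $C:=\cO(\bG)_X$, one then has the clean description $M_C=\{\,m\in M:\Delta_M(m)\in M\otimes C\,\}$, which I would use throughout.

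With $C$ a subcoalgebra, the plan is to prove the two one-sided inclusions $(M\otimes N)_C\subseteq M_C\otimes N$ and $(M\otimes N)_C\subseteq M\otimes N_C$ and then intersect, via $(M_C\otimes N)\cap(M\otimes N_C)=M_C\otimes N_C$. For the first, fix a basis $\{d_b\}$ of $N$ with $\Delta_N(d_b)=\sum_{b'}d_{b'}\otimes g_{b'b}$ and write $w=\sum_b m_b\otimes d_b$. Expanding the diagonal coaction, $w\in(M\otimes N)_C$ forces, for every index $b'$, the expression obtained from $\sum_b\Delta_M(m_b)$ by multiplying the $\cO(\bG)$-factor against $g_{b'b}$ to lie in $M\otimes C$. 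The idea is to isolate a single $\Delta_M(m_{b_0})$ by contracting against the antipode $S$ of $\cO(\bG)$, using the comodule identity $\sum_{b'}S(g_{b_0b'})\,g_{b'b}=\delta_{b_0b}\,1$, which follows from coassociativity of $\Delta_N$ together with the antipode axiom; the counit relation $\epsilon(g_{b'b})=\delta_{b'b}$ would be used to normalize. This recovers $\Delta_M(m_{b_0})$ as an explicit combination of the controlled partial coactions.

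The hard part, and the genuine heart of the statement, is the final control that these extracted partial coactions land in $M\otimes C$ rather than merely in $M\otimes\cO(\bG)$. The obstruction is that $C$ is only a subcoalgebra and not a subbialgebra: $\Delta_{\cO(\bG)}(C)\not\subseteq C\otimes C$, and products of matrix coefficients of $M$ with those of $N$ leave $C$, so contracting against $S(g_{b_0b'})$ multiplies the coalgebra factor by functions that need not preserve $C$. I therefore expect the decisive step to be a coassociativity argument showing that, once one uses the full $X$-comodule condition on $w$ (not merely $\Delta_{M\otimes N}(w)\in(M\otimes N)\otimes C$) together with the linear relations it imposes among the components $m_b$, the cross-terms between the two tensor factors cancel and each component is forced into $M_C$. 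Making this cancellation precise — in particular ruling out ``diagonal'' contributions that can survive when coefficients of complementary degree in the two factors combine — is where the argument must do its real work.
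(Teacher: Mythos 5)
Your proposal addresses only part (4); parts (1)--(3) (the abelian subcategory closed under filtering colimits, functoriality of $(-)_X$, left exactness and the adjunction) never appear, whereas in the paper they are dispatched using Lemma \ref{lem:ker-coker} and the colimit description (\ref{eqn:bracket-X}). Concerning part (4), your preliminary reductions are individually sound: the passage to finite dimensional subcomodules, the identity $M_X\cap M_0=(M_0)_X$, and the replacement of $X$ by the largest sub-coalgebra it contains are all legitimate (though note that the paper establishes $\cO(\bG)_X$ and the identification $Mod(\bG,X)=CoMod(\cO(\bG)_X)$ only in Theorem \ref{thm:asc-coalg}, whose proof invokes the adjunction of Theorem \ref{thm:M-X}(3); to avoid circularity you must, as you implicitly do, derive this from coefficient-space theory directly). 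The antipode contraction $\sum_{b'}S(g_{b_0b'})g_{b'b}=\delta_{b_0b}1$ is also correct. But, as you yourself say, it only produces $\Delta_M(m_{b_0})\in M\otimes\cO(\bG)$, and you then defer the decisive step --- forcing the extracted coefficients into $C$ --- to an ``expected'' coassociativity cancellation. That is a confessed gap, and it is located at exactly the right place.

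The gap cannot be closed, because the cancellation you hope for does not exist: the ``diagonal'' contributions from coefficients of complementary weight that you flag as the danger are a genuine obstruction, not a technical nuisance. Take $\bG=\bG_m$, let $M$ be the one-dimensional comodule with $\Delta_M(v)=v\otimes t$, let $N$ be its dual with $\Delta_N(w)=w\otimes t^{-1}$, and let $X=k\cdot 1$. Then $M\otimes N$ is trivial, so $(M\otimes N)_X=M\otimes N\not=0$, while $M_X=0=N_X$; more generally, for $X=k$ Example \ref{ex:coid} identifies $(-)_X$ with $H^0(\bG,-)$, and part (4) would assert $H^0(\bG,M\otimes N)\subseteq H^0(\bG,M)\otimes H^0(\bG,N)$, which fails whenever $N=M^*$ with $M$ a nontrivial irreducible. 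The paper itself records this phenomenon just before Proposition \ref{prop:reverse}: $(k_{det})_{\cO(GL_N)_{\leq N-1}}=0=(k_{det^{-1}})_{\cO(GL_N)_{\leq N-1}}$ although $k_{det}\otimes k_{det^{-1}}=k$. So the inclusion of part (4) is false at this level of generality, and any correct result must either impose extra hypotheses, as in Proposition \ref{prop:reverse}, or run in the opposite direction, as in Propositions \ref{prop:tensor} and \ref{prop:mult-G}(2). For comparison, the paper's own proof of (4) expands $\Delta_{M\otimes N}(m\otimes n)$ for a pure tensor using the normal form of Lemma \ref{lem:1} and concludes that all $f_i,g_j$ lie in $X$; this tacitly assumes the vectors $m\otimes n$, $m\otimes n_j$, $m_i\otimes n$, $m_i\otimes n_j$ are linearly independent (and treats only pure tensors), which is precisely what fails in the examples above, where $m_1=m$ and $n_1=n$ so that $f_1g_1$ recombines with the other coefficients to give $1$. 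Your diagnosis of where the real work lies was therefore exactly right; the difficulty is a counterexample, not a missing computation, and it defeats the paper's argument at the same spot.
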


\begin{proof}
The fact that $Mod(\bG,X)$ is an abelian subcategory of $Mod(\bG)$ follows directly from
Lemma \ref{lem:ker-coker}.    To prove that  $Mod(\bG,X)$ is closed under colimits indexed
by a filtering category $I$, observe that the natural map $\varinjlim_i (M_i \otimes X) \ \to \
\varinjlim_i (M_i) \otimes X$ is an isomorphism.  Thus, if each $M_i$ is an $X$-comodule, 
so is $\varinjlim_i (M_i)$.

To prove functoriality of $(-)_X$, observe that if $f: M \to N$ is a map in $Mod(\bG)$ 
then $f(M_X) \ \subset \ N$ is
contained in $N_X$  by Lemma \ref{lem:ker-coker}(2) and the equality 
$N_X   \ = \ \bigcup_{N^\prime \in \chi_M} N^\prime \ \subset N$ of (\ref{eqn:bracket-X}).  
This equality also shows that $(-)_X$ is left exact.

Functoriality together with (1.3.1) determines the natural inclusion
$$Hom_{Mod(\bG)}(i_{X*}(M),N) \quad \hookrightarrow \quad Hom_{Mod(\bG,X)}(M,N_X)$$
inverse to the  inclusion $Hom_{Mod(\bG,X)}(M,N_X) \ \hookrightarrow \ Hom_{Mod(\bG)}(i_{X*}(M),N)$
and thus a bijection.  This is the required isomorphism for the asserted adjunction.
\end{proof}

\vskip .1in

Perhaps the simplest example of the functor $(-)_X: Mod(\bG) \quad \to \quad Mod(\bG,X)$
is the case in which $X = k$, the span of $1 \in \cO(\bG)$.  In this case,
$(-)_X \ = \ H^0(\bG,-)$.  Observe that $H^0(\bG,-)$ is left exact for any $\bG$, but is not always exact.

\vskip .1in

We say a sequence of subspaces \ $\{ X_i \}$ of $\cO(\bG)$ indexed by the non-negative
numbers $i \geq 0$ is an {\it ascending 
converging sequence of subspaces} of $\cO(\bG)$
if $X_i \subset X_{i+1}$ for all $i \geq 0$ and if \
 $\bigcup_{i \geq 0} X_i \ = \  \cO(\bG)$.  

\vskip .1in

\begin{prop}
\label{prop:exhaust-X}
Let $\bG$ be an affine group scheme of finite type over $k$ and let $\{ X_i \}$
be an ascending converging sequence of subspaces of $\cO(\bG)$.
If $M$ is a finite dimensional $\bG$-module, then $M$ is an $X_i$-comodule for all $i \gg 0.$

Sending a $\bG$-module $M$ to the sequence of $\bG$-submodules
\begin{equation}
\label{eqn:X-seq}
M_{X_0} \ \subset \ M_{X_1} \ \subset M_{X_2} \subset \cdots \ \subset \ \bigcup_{i \geq 0} M_{X_i} \ = \ M
\end{equation}
is a filtration of $M$, functorial in $M$,  with the property that each $M_{X_i}$ is an $X_i$-comodule.

\end{prop}

\begin{proof}
If the $\bG$-module $M$ is finite dimensional, then $\Delta_M: M \to M\otimes \cO(\bG)$ must have
image in some finite dimensional subspace of $M \otimes X$ and
thus must have image contained in some $M\otimes X_i$.   If $M$ 
is an arbitrary $\bG$-module, then $M$
is locally finite so that every $m \in M$ lies in some finite dimensional
$\bG$-submodule $M^\prime \subset M$  and thus must be contained in some $M_{X_i}$ as required.
\end{proof}

\vskip .1in

We argue exactly as in the proof of \cite[Prop 4.2]{F18} to conclude the following
detection of rational injectivity of a $\bG$-module.
 
\begin{prop}
\label{prop:X-injectivity}
Consider an affine group scheme $\bG$ of finite type over $k$ and choose an 
ascending converging sequence $\{ X_i\}$ of subspaces of $\cO(\bG)$.
Then a $\bG$-module $L$ is injective  if and only if $L_{X_i}$ is
an injective object of $Mod(\bG,X_i)$ for all $i \geq 0$.
\end{prop}

\begin{proof}
If $L$ is injective, then the adjunction of the left exactness of $(-)_X$ and the exactness 
of its left adjoint $i_{X*}(-)$ tell us that $L_{X_i} \subset L$ 
is an injective object of $Mod(\bG,X_i)$ for each $i \geq 0$.

Assume now that the $\bG$-module $L$ has the property that 
$L_{X_i} \subset L$ is an injective object of $Mod(\bG,X_i)$ for all $i \geq 0$.  
Let $M^\prime \hookrightarrow M$ be an inclusion of $G$-modules and consider
a map  $f^\prime: M^\prime \to L$ of $\bG$-modules.  We inductively 
construct an extension of $f: M \to L$ of $f^\prime$ as follows.
Denote by $f^\prime_d: M^\prime_{X_d} \to L_{X_d}$ the restriction of
$f^\prime$ to $M^\prime_{X_d}$.  Choose $f_d: M_{X_d} \to L_{X_d}$ 
extending $f_d^\prime + f_{d-1}: (M^\prime)_{X_d} + M_{X_{d-1}} \to L_{X_d}$ using the 
injectivity of $L_{X_d}$ as an object of $Mod(\bG,X_d)$ (and taking $M_{X_{-1}} = 0$).  We define
$f: M \to L$ extending $f^\prime$ to be $\varinjlim_i f_i: M = \varinjlim_i ((M^\prime)_{X_i} + M_{X_{i-1}})  \to L$.
\end{proof}

\vskip .1in

We proceed to give in Proposition \ref{prop:exp-degree} a simple ``fix" for the 
``filtration by exponential degree" of a $\bG$-module $M$ for a linear algebraic group $\bG$ 
of exponential type given in \cite{F18}.  Our modification provides the largest filtration 
subordinate to that of \cite{F18} which is a filtration by $\bG$-submodules. 

We recall the definition of a linear algebraic group $\bG$ of exponential type, a class of affine 
algebraic groups for which there is a somewhat explicit geometric description of the support
varieties of its representations.  Let $\cN_p(\fg)$ denote the $p$-nilpotent 
cone of the Lie algebra $\fg = Lie(\bG)$.  Thus, $\cN_p(\fg) \ \subset \ \fg$ consists
of those $X \in \fg$ such that $X^{[p]} = 0$.  We utilize the notation $\cC_r(N_p(\fg))$
to denote the commuting variety of $r$-tuples of pair-wise commuting, $p$-nilpotent elements of $\fg$.
We recall from \cite[Thm1.5]{SFB1} the scheme $V(\bG_{(r)})$ representing the functor of 1-parameter 
subgroups of the infinitesimal group scheme $\bG_{(r)}$.

\begin{defn}
\label{defn:exp-type}
\cite[Defn1.6]{F15}
Let $\bG$ be a linear algebraic group with Lie algebra $\fg$.   A structure of exponential type
on $\bG$ is a $\bG$-equivariant morphism of $k$-schemes (with respect to adjoint actions)
\begin{equation}
\label{eqn:Exp}
\cE: \cN_p(\fg) \times \bG_a \ \to \bG, \quad (B,s) \mapsto \cE_B(s)
\end{equation}
satisfying the following conditions for all field extensions $K/k$:
\begin{enumerate}
\item
For each $B\in \cN_p(\fg)(K)$, $\cE_B: \bG_{a,K} \to \bG_K$ is a 1-parameter subgroup.
\item
For any pair of  commuting $p$-nilpotent elements $B, B^\prime \in \fg_K$,
the maps $\cE_B, \cE_{B^\prime}: \bG_{a,K} \to \bG_K$ commute.
\item
For any $\alpha \in K$,  and any 
$s \in \bG_a(K)$, \ $\cE_{\alpha \cdot B}(s) = \cE_B(\alpha\cdot s)$.
\item  Every 1-parameter subgroup $\psi: \bG_{a,K} \to \bG_K$ is of the form 
$$ \cE_{\ul B} \ \equiv \ \prod_{s=0}^{r-1} (\cE_{B_s} \circ F^s)$$
for some $r > 0$, some $\ul B \in \cC_r(\cN_p(\fg_K))$.
\item   The natural
map $\cC_r(\cN_p(\fg)) \to V(\bG_{(r)})$ induces a bijection on $K$-points
sending $\ul B$ to the infinitesimal 1-parameter subgroup $\bG_{a(r),K} \to \bG_{(r),K}$
which factors $\cE_{\ul B} \circ i_r: \bG_{a(r),K} \to \bG_{a,K} \to \bG_{K}$.
\end{enumerate}
\end{defn}

\vskip .1in

Various examples of $\bG$ of exponential type are considered in \cite{Sobj13}; these
include simple classical groups, their standard parabolic subgroups, and the
unipotent radicals of these parabolic subgroups.

\begin{prop} (\cite[Defn 4.5]{F15})
\label{prop:exp-degree}
Let $(\bG,\cE)$ be a linear algebraic group of exponential type.  We define $\cE(\bG)_d
\ \hookrightarrow \ \cO(\bG)$ to be the subspace 
\begin{equation}
\label{eqn:E(d)}
\cE(\bG)_d \ \equiv \ \cE^{*-1}(k[(\cN_p(\fg)][t]_{\leq d}) \quad \subset \quad \cO(\bG)
\end{equation}
where $k[\cN_p(\fg)][t]_{\leq d} \subset k[\cN_p(\fg) \times \bG_a]$ is the 
subspace of polynomials in $k[\cN_p(\fg)][t]$ of degree $\leq d$.  

So defined, 
$\{ M_{\cE(\bG)_d}\}$ is the coarsest filtration of $M$ by $\bG$-modules subordinate
to the `` filtration by exponential degree" of \cite{F18}.
\end{prop}

\begin{proof}
The ``filtration by exponential degree" of \cite[Defn 3.10]{F18} associates to the $\bG$-module $M$
and a positive integer $d$ the subspace $M_{[d]} \subset M$ consisting of elements 
$m \in M$ with the property that $\Delta_M(m) \in M\otimes \cE(\bG)_d$.   By Proposition \ref{prop:colimit},
$M_{\cE(\bG)_d}$ is the largest $\bG$-submodule of $M$ such that $M_{\cE(\bG)_d} \subset M_{[d]}$.
\end{proof}

\vskip .1in

In the following proposition, $\cE_B: \bG_{a,K} \to \bG_K$ is the exponential map determined
by a $K$-point $B$ of $\cN_p(\fg)$ (for some field extension $K/k$)
and the exponential structure $\cE: \cN_p(\bG) \times \bG_a \to \bG$.
For any $s \geq 0$, $u_s: k[t] \to k$ is the $k$-linear map sending $t^i$ to 0 if $i\not= p^s$
and sending $t^{p^s}$ to 1.  We denote by $(\cE_B)_*(u_s): \cO(\bG_K) \to k$  the linear map given
by the composition $u_s \circ (\cE_B)^*: \cO(\bG_K) \to K[t] \to K$.

We utilize the (``$\pi$-point") support variety $M \mapsto \Pi(\bG)_M$ of \cite{F23} extending the construction 
for finite group schemes given in \cite{FP2}.

We justify saying that $\Pi(\bG)_M$ is the 
``inverse image under the projection" of $\Pi(\bG_{(r)})_{M|\bG_{(r)}}$ by recalling that
$\Pi(\bG)$ (respectively, $\Pi(\bG_r)$) for $\bG$ of exponential type can be identified with 
$\bP(\cC_\infty(\bG))$ (resp., $\bP(\cC_r(\bG))$) and there is a natural projection
$\cC_\infty(\bG) \to \cC_r(\bG)$.

\begin{prop}
\label{prop:support-exp}
Let $(\bG,\cE)$ be a linear algebraic group of exponential type and let $M$ be a $\bG$-module 
with the property that the coaction $\Delta_M: M \to M\otimes \cO(\bG)$ factors through
$M\otimes \cE(\bG)_{p^r} \hookrightarrow M \otimes \cO(\bG)$; in other words, assume 
that $M = M_{\cE(\bG)_{p^r}}$.   Then, for any
$K$-point $B$ of $\cN_p(\fg)$, $(\cE_B)_*(u_s)$ acts trivially on $M_K$ provided that $s \geq r$.

Consequently, if  $M= M_{\cE(\bG)_{p^r}}$, then the support variety $\Pi(\bG)_M$ of $M$ is the 
``inverse image under the projection" of $\Pi(\bG_{(r)})_{M|\bG_{(r)}}$ (containing the
center of the ``projection" $\Pi(\bG) \to \Pi(\bG_{(r)})$, where $M|\bG_{(r)}$ denotes the 
restriction of $M$ to the Frobenius kernel $\bG_{(r)} \hookrightarrow \bG$.
\end{prop}

\begin{proof}
The proposition follows immediately from Proposition \ref{prop:exp-degree} and
\cite[Prop 3.17]{F18}.
\end{proof}

\vskip .2in


\section{Ascending, converging sequences of subcoalgebras}
\label{sec:coalgebras}

In this section, we investigate ascending, converging sequences $\{ C_i \}$ of subcoalgebras
of $\bG$.  The advantage of considering a subcoalgebra $C \subset  \cO(\bG)$ rather
than an arbitrary subspace $X \subset \cO(\bG)$ is that the abelian subcategory $Mod(\bG,C)$
as in Definition \ref{defn:X-comodule} (with $X = C$) is equal to the category of comodules
for $C$,
\begin{equation}
\label{eqn:advantage}
 Mod(\bG,C) \ = \ CoMod(C) \ \subset \ Mod(\bG).
 \end{equation}
Moreover,  if $C \subset \cO(\bG)$ is a subcoalgebra and $M$ is a $\bG$-module, then 
 \begin{equation}
\label{eqn:advantage2}
M_C \quad = \quad \{ m \in M: \Delta_M(m) \in M \otimes C \} \ \subset \ M.
\end{equation}
A useful consequence of (\ref{eqn:advantage2}) is the equality
\begin{equation}
\label{eqn:advantage3}
(k[\bG]^R)_C  \quad = \quad C
\end{equation}
provided that $C$ contains the unit of $\cO(\bG)$.  This
follows from the observation that $\Delta: \cO(\bG) \to \cO(\bG)$
satisfies the condition that $\Delta(f) - (f\otimes 1 + 1\otimes f)$ lies
in $I \otimes I$ for all $f \in I$, where $I \subset \cO(\bG)$ is the augmentation
ideal of $\cO(\bG)$ (see [I.2.4]{J}) 

\vskip .1in

The following theorem, called the ``Fundamental Theorem of 
Coalgebras" in \cite{Sw} and the ``Finiteness Theorem" in \cite{smontgom} 
(when stated for arbitrary coalgebras) has the following appealing form
when specialized to the Hopf algebra $\cO(\bG)$.  We use the notation of \cite[I.2.13]{J}:
$k\bG\cdot X$ for the $\bG$-module submodule of the right regular representation
of $\bG$ on $\cO(\bG)$ (given by $f(-) \mapsto f(-g)$) generated by elements of $X$; $k\bG^{op} \cdot Y$ is the 
$\bG^{op}$-submodule of the right regular representation of $\bG^{op}$ on $\cO(\bG^{op})$
(given by $(g,f(-)) \mapsto f(g-)$) generated by elements of $Y$.

\begin{thm}
\label{thm:finite}
Let $\bG$ be an affine group scheme and $X \subset \cO(\bG)$ be a subspace.
Then there is a smallest subcoalgebra of $\cO(\bG)$ containing $X$, 
$\cO(\bG)_{\langle X \rangle} \subset \cO(\bG)$, given by 
$$\cO(\bG)_{\langle X \rangle}  \quad \equiv \quad k\bG^{op} \cdot (k\bG \cdot X).$$
If $X$ is finite dimensional, then  $\cO(\bG)_{\langle X \rangle} $ is also finite dimensional.

Consequently, for any ascending, converging sequences $\{ X_i \}$
of finite dimensional subspaces of $\cO(\bG)$,  there is a smallest ascending, 
converging sequence $\{ \cO(\bG)_{\langle X_i \rangle}  \}$ of finite dimensional subcoalgebras 
of $\cO(\bG)$ satisfying the condition that  $X_i \subset  \cO(\bG)_{\langle X_i \rangle}$.
\end{thm}

\begin{proof}
The proof that $k\bG\cdot X$ is the $\bG$-submodule of $k[G]^R$ generated by $X$
is given in \cite[I.2.13]{J}, implicitly proving that $k\bG\cdot X$ is finite dimensional
if $X$ is finite dimensional.  Applying this to $\bG^{op}$ and the subspace $k\bG\cdot X \subset
k[\bG^{op}]^R$, we conclude that \ $\cO(\bG)_{\langle X \rangle}  \ \equiv \ k\bG^{op} \cdot (k\bG \cdot X)$ \
is finite dimensional if $X$ is finite dimensional and is a $\cO(\bG^{op})$-subcomodule of
$k[\bG^{op}]^R$.  Because the right regular actions of $\bG$ of $\cO(\bG)$ and $\bG^{op}$
on $\cO(\bG^{op})$ commute once one identifies $\cO(\bG)$ with $\cO(\bG^{op})$ as
$k$-vector spaces, we conclude that $\cO(\bG)_{\langle X \rangle} $ is also a $\cO(\bG)$-subcomodule of
$k[\bG]^R$.  This implies that $\cO(\bG)_{\langle X \rangle}  \subset \cO(\bG)$ is a subcoalgebra.
\end{proof}

\vskip .1in

In the following proposition, we relate the subcategories 
$Mod(\bG,X)$ and \\ $CoMod(\cO(\bG)_{\langle X \rangle})$ of $Mod(\bG)$.  

\begin{prop}
\label{prop:relate}
Consider an affine group scheme $\bG$ and a family $\cM \subset Mod(\bG)$ of $\bG$-modules.
Let $X \subset \cO(\bG)$ be the smallest subspace such that $M \in Mod(\bG,X)$ for all $M \in \cM$.
Then $\cO(\bG)_{\langle X \rangle} $ is the smallest subcoalgebra $C \subset \cO(\bG)$ with the property that
$Mod(\bG,X) \ \hookrightarrow \  CoMod(C) \subset Mod(\bG)$. 

In particular, $Mod(\bG,\cO(\bG)_{\langle X \rangle}) \ = \ CoMod(\cO(\bG)_{\langle X \rangle} )$.
\end{prop}

\begin{proof} 
Write $\cM  = \{ M_\alpha \}$.  Choose a basis $\{m_{\alpha,i_\alpha} \}$ for each $M_\alpha$ and
write $\Delta_{M_\alpha}(m_{\alpha,j_\alpha}) = \sum m_{\alpha,(i,j)_\alpha} \otimes f_{\alpha,(i,j)_\alpha}$.
Then $X$ is the span of $\{ f_{\alpha,(i,j)_\alpha} \}$.   Thus, $\Delta_{M_\alpha} \subset 
M_\alpha \otimes \cO(\bG)_X$ for every $M_\alpha$ in $\cM$, so that $X \subset \cO(\bG)_{\langle X \rangle} $.

On the other hand, if $C\subset \cO(\bG)$ is a subcolagebra with the property that every
$M_\alpha \in \cM$ is a $C$-comodule, then each $f_{\alpha,(i,j)_\alpha}$ must be an element
of $C$.
\end{proof}
%
%
%

\vskip .1in

The tensor product of $\cO(\bG)$-comodules (i.e., $\bG$-modules) involves the product
structure $\cO(\bG) \otimes \cO(\bG) \to \cO(\bG)$ induced by the diagonal $diag: 
\bG \hookrightarrow \bG \times \bG$.  Thus, unless the subcoalgebra $C \subset \cO(\bG)$
is also a subalgebra, this tensor product does not induce a tensor product structure on $CoMod(C)$.  

The following suggests a useful condition on ascending converging sequences of subcoalgebras
of $\cO(\bG)$.

\begin{prop}
\label{prop:prod-coalg}
Let $\bG$ be an affine group scheme.  Let $X, \ Y \ \subset \cO(\bG)$ be subspaces
and set $X \cdot Y \subset \cO(\bG)$ be the subspace spanned by products $x \cdot y$
with $x \in X, \ y \in Y$.  For any $\bG$-modules $M$ and $N$, the $\cO(\bG)$-module
$M_X \otimes M_Y$ is contained in $(M\otimes N)_{X\cdot Y}$.  Consequently,
\begin{enumerate}
\item
For subcoalgebras $C, \ C^\prime, \ C^{\prime\prime}$ such that the 
multiplication map $\mu: \cO(\bG) \otimes \cO(\bG) \to \cO(\bG) $ restricts to
$C \otimes C^\prime \to C^{\prime\prime}$, there is a natural map of $\bG$-modules
\ $M_C \otimes N_{C^\prime} \ \to \ (M\otimes N)_{C^{\prime\prime}}$
for every pair of $\bG$-modules $M, N$.
\item
If $(\bG,\cE)$ is a linear algebraic group of exponential type, there os a natural map
of $\bG$-modules  \ $M_{\cE(d)} \otimes N_{\cE(e)} \ \to \ (M\otimes N)_{\cE(d+e)}$
for every pair of $\bG$-modules $M, N$.
\end{enumerate}
\end{prop}

\begin{proof}  The proof follows easily from the observation 
the coaction map $\Delta_{M\otimes N}: M\otimes N \ \to \ (M\otimes N) \otimes \cO(\bG)$  arises by 
composing $\Delta_M \otimes \Delta_N$ 
with the product map $\cO(\bG) \otimes \cO(\bG) \ \to \ \cO(\bG)$.
\end{proof}

\vskip .1in

Any affine group scheme $\bG$ is a closed subgroup scheme of some $GL_N$.
Given a closed emebedding $\phi: \bG  \hookrightarrow GL_N$ of group schemes 
with map on coordinate algebras $\phi^*: \cO_{GL_N} \twoheadrightarrow \cO(\bG)$ and an ascending,
converging sequence $\{ C_i \subset \cO(GL_N) \}$ of subcoalgebras of $\cO(GL_N)$, restricting 
along $\phi^*$ determines the ascending, converging sequence of subcoalgebras
$\{ \phi^*(C_i) \subset \cO(\bG) \}$.   This motivates our formulation in Proposition \ref{prop:O(GLN)} of an explicit  
ascending, converging sequence $\{ \cO(GL_N)_{\leq d} \}$ of subcoalgebras of $\cO(GL_N)$.

We will implicitly use the following observation:  for any surjective map $\phi: C \twoheadrightarrow C^\prime$
of coalgebras and any subcoalgebra $D \hookrightarrow C$, $D^\prime \equiv \phi(D) \subset C^\prime$ 
is a subcoalgebra.  This is easily verified using the fact that the map $\Phi$ of coalgebras commutes with 
coproducts, so that the coproduct on $D$ induces a coproduct of $D^\prime$ and the counit of $C^\prime$
restricts to a counit for $D^\prime$.

\begin{defn}
\label{defn:filt-GL}
Let $\cO(\bM_{N, N})$ be the bialgebra given as the coordinate algebra of the affine 
variety of $N \times N$ matrices with monoid structure given by matrix multiplication.
For any $r \geq 0$, we define the subspace $\cO(\bM_{N, N})_{\leq r}$ to
be the subspace of the polynomial algebra in the $N^2$-variables $x_{i,j}$ consisting of
polynomials of total degree  $\leq r$.   Let $\cO(\bG_m)$ be the bialgebra with 
coordinate algebra $k[t,t^{-1}]$ with respect to whose coproduct both $t$ and $t^{-1}$
are primitive,
and define $\cO(\bG_m)_{\leq s}$ to be the span of $\{ t^i, \ N\cdot |i| \leq s \}$.

Consider the closed immersion of monoid schemes
\begin{equation}
\label{eqn:embed} 
\eta: GL_N \ \hookrightarrow \ \bM_{N,N} \times \bG_m, \quad A \mapsto (A,det(A)^{-1}),
\end{equation}
identifying $GL_N$ as the zero locus of the function 
$$(\sum_{\sigma\in \Sigma_N} (-1)^{sgn(\sigma)}\prod_{1\leq i\leq N} x_{i,\sigma(i)}) \ \otimes \ t^{-1}.$$
We give $\cO(\bM_{N,N} \times \bG_m)$ the filtration defined by the tensor product of the above filtrations on
$\cO(\bM_{N, N})$ and $\cO(\bG_m)$.

For any $d \geq 0$, define
$\cO(GL_N)_{\leq d} \ \equiv \ \eta^*((\cO(\bM_{N,N} \times \bG_m)_{\leq d})$.
\end{defn}

\vskip .1in

\begin{prop}
\label{prop:O(GLN)}
Adopt the notation of Definition \ref{defn:filt-GL}.
\begin{enumerate}
\item
$\eta^*: \cO(\bG_m) \otimes \cO(\bM_{N,N}) \ \twoheadrightarrow \ \cO(GL_N)$
is a surjective map of filtered coalgebras.
\item
For any $d \geq 0$, $\cO(GL_N)_{\leq d} \ \hookrightarrow \  \cO(GL_N)$ is a subcoalgebra.
\item
$\{ \cO(GL_N)_{\leq d} \}$ is an ascending, converging sequence of finite 
dimensional subcoalgebras of $\cO(GL_N)$.
\item
The function sending $d$ to $dim(\cO(GL_N)_{\leq d})$ for a fixed $N$
differs from the function $d \mapsto \frac{1}{N^2!}{d^{N^2}}$ by a function 
bounded by a polynomial in $d$ of degree less that $N^2$.
\end{enumerate}
\end{prop}

\begin{proof}
To prove (1), first observe that $\eta^*$ is a map of coalgebras since 
$\eta$ is a homomorphism of monoid schemes.  The surjectivity of $\eta^*$ 
follows from the fact $\eta$ is a closed immersion.   Assertion (2) follows from
the fact that both $\cO(\bM_{N, N})_{\leq r} \hookrightarrow \cO(\bM_{N, N})_{\leq r}$
and $\cO(\bG_m)_{\leq s} \hookrightarrow \cO(\bG_m)$ are subcoalgebras so that
$(\cO(\bM_{N,N} \times \bG_m))_{\leq d} \hookrightarrow \cO(\bM_{N,N} \times \bG_m)$
is also a subcoalgebra.  Assertion (3) follows immediately.

We easily verify that 
\begin{equation}
\label{eqn:dim}
dim(\cO(\bM_{N\times N})_{\leq r}) \ = \ {r + N^2\choose N^2}, \quad dim(\cO(\bG_m)_s )
\ = \ 2s+1.
\end{equation}
and that $dim(\cO(GL_N)_{\leq r}) - dim(\cO(\bM_{N\times N})_{\leq r})$ equals 
$$\sum_{1 \leq i \leq [r/N]} dim(\cO(\bM_{N\times N})_{\leq r-iN}) - dim(\cO(\bM_{N\times N})_{\leq r-(i+1)N}).$$
Thus, the difference of the function $r \mapsto dim(\cO(GL_N)_{\leq r})$ and the 
function $r \mapsto \frac{1}{N^2!}{r^{N^2}}$ has growth less than polynomial in $r$ of 
degree less than $N^2$ as in assertion (4).
\end{proof}

\vskip .1in

\begin{defn}
\label{defn:O(bG)}
Consider an affine group scheme $\bG$ equipped with a closed embedding $\phi: \bG \hookrightarrow GL_N$
for some $N$.
We define the ascending, converging filtration $\{ \cO(\bG)_{\leq d,\phi},  d > 0 \}$ of subcoalgebras of $\cO(\bG)$
by setting $\cO(\bG))_{\leq d,\phi}$ equal to $\phi^*(\cO(GL_N)_{\leq d})$.  
\end{defn}

\vskip .1in

For various unipotent linear algebraic groups $\phi: \bU \hookrightarrow GL_N$, we give a 
familiar description of  $\{ \cO(\bU)_{\leq d,\phi} \}$.

\begin{ex} \cite[Ex 2.5]{F18}
\label{ex:unipotent}
Let $\phi: \bU_N \hookrightarrow GL_N$ be the unipotent radical of the Borel subgroup of upper
triangular matrices of $GL_N$.  Then $\phi^*: \cO(GL_N) \ \twoheadrightarrow \ \cO(\bU_N) \simeq k[y_{i,j}; i < j]$
is given by  \ $x_{i,j} \mapsto y_{i,j}, \ i < j$, $x_{i,i} \mapsto 1$, and $x_{i,j} \mapsto 0, \ i > j$.
The coproduct on $\cO(\bU_N) \simeq k[y_{i,j}; i < j]$ is given by
$$\Delta_{\bU_N}(y_{i,j}) \quad = \quad (y_{i,j}\otimes 1) + (\sum_{i < t < j} (y_i\otimes y_t + y_t \otimes y_j)) 
+ (1\otimes y_{i,j}).$$
We identify the subcoalgebra $\cO(\bU_N)_{\leq d,\phi} \subset \cO(\bU_N)$ with 
the subspace of $k[y_{i,j}; i < j]$ consisting of polynomials of total degree $\leq d$ and with coproduct 
the restriction of $\Delta_{\bU_N}$ as above.  Thus,
$$dim(\cO(\bU_N)_{\leq d,\phi}) \quad = \quad \binom{N(N-1)+d}{N(N-1)}$$
which differs from $\frac{1}{(N(N-1))!}d^{N(N-1)}$ by a function bounded by a polynomial
in $d$ of degree less than $N(N-1)$.

A similar description is given for $\cO(\bU)_{\leq d,\phi}$ for any closed subgroup scheme 
$\phi: \bU \subset \bU_N \subset GL_N$ such that
the composition $\cO(GL_N) \stackrel{j^*}{\to} \cO(\bU)$ is surjective.  
\end{ex}

\vskip .1in

We show that changing the embedding $\phi: \bG \hookrightarrow GL_N$ has limited effect
upon the associated ascending converging sequences of subcoalgebras of $\cO(\bG)$.

\begin{prop}
\label{prop:diff-emb}
Let $\bG$ be an affine group scheme  and consider two closed 
embeddings \ $\phi: \bG  \hookrightarrow GL_N, \ \phi^\prime: \bG  \hookrightarrow GL_{N^\prime}.$
There exist positive numbers $c, c^\prime$ such that 
$$\cO(\bG)_{\leq d,\phi} \ \subset  \cO(\bG)_{\leq c\cdot d, \phi^\prime}, \quad
\cO(\bG)_{\leq d,\phi^\prime} \ \subset  \cO(\bG)_{\leq c^\prime \cdot d,\phi}$$
for all $d \geq 0$.
\end{prop}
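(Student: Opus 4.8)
The plan is to use two structural features of the sequences $\{\cO(\bG)_{\leq d,\phi}\}$ and $\{\cO(\bG)_{\leq d,\phi^\prime}\}$: that each is a \emph{multiplicative} filtration of the algebra $\cO(\bG)$, and that $\cO(\bG)_{\leq d,\phi}$ is spanned by monomials in finitely many fixed generators whose number of factors is controlled linearly by $d$. Granting these, the estimate is produced by placing each of the (finitely many) generators into a single level of the $\phi^\prime$-filtration and then invoking multiplicativity; the resulting bound is automatically linear in $d$.

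First I would record multiplicativity. The weighted total-degree filtration $\{\cO(\bM_{N,N}\times\bG_m)_{\leq d}\}$ satisfies $\cO(\bM_{N,N}\times\bG_m)_{\leq d}\cdot\cO(\bM_{N,N}\times\bG_m)_{\leq e}\subset\cO(\bM_{N,N}\times\bG_m)_{\leq d+e}$, directly from the definition by degree. By the construction in Theorem \ref{thm:filtGL} and Theorem \ref{thm:filtbG}, $\cO(\bG)_{\leq d,\phi}$ is the image of $\cO(\bM_{N,N}\times\bG_m)_{\leq d}$ under the composite of algebra maps $\cO(\bM_{N,N}\times\bG_m)\to\cO(GL_N)\to\cO(\bG)$, so the same inclusion descends: $\cO(\bG)_{\leq d,\phi}\cdot\cO(\bG)_{\leq e,\phi}\subset\cO(\bG)_{\leq d+e,\phi}$, and likewise for $\phi^\prime$. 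Iterating, a product of $m$ elements each lying in $\cO(\bG)_{\leq e,\phi^\prime}$ lies in $\cO(\bG)_{\leq me,\phi^\prime}$.

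Next I would bound the generators. By Theorem \ref{thm:filtGL}(2), $\cO(GL_N)_{\leq d}$ is spanned by the elements $p(\ul x)\,\det(\ul x)^{-b}$ with $0\leq b\leq[d/N]$ and $\deg p\leq d-bN$; applying $\phi^*$, $\cO(\bG)_{\leq d,\phi}$ is spanned by the images $\phi^*(p(\ul x))\cdot\phi^*(\det(\ul x)^{-1})^{b}$. Expanding $p$ into monomials, each such image is a linear combination of products of at most $\deg p+b\leq(d-bN)+b\leq d$ of the $N^2+1$ generators $\phi^*(x_{i,j})$ and $\phi^*(\det(\ul x)^{-1})$ (using $N\geq1$). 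Since $\{\cO(\bG)_{\leq e,\phi^\prime}\}$ is converging it exhausts $\cO(\bG)$, so these finitely many generators all lie in $\cO(\bG)_{\leq c,\phi^\prime}$ for the common value $c$ equal to the maximum of their (finitely many) $\phi^\prime$-filtration degrees. By the multiplicativity recorded above, each spanning monomial of $\cO(\bG)_{\leq d,\phi}$ then lies in $\cO(\bG)_{\leq cd,\phi^\prime}$, whence $\cO(\bG)_{\leq d,\phi}\subset\cO(\bG)_{\leq cd,\phi^\prime}$. Exchanging the roles of $\phi$ and $\phi^\prime$ yields a constant $c^\prime$ and the reverse inclusion.

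I do not anticipate a genuine obstacle. The only points needing care are that the relevant maps of coordinate rings are honest algebra homomorphisms, so that multiplicativity of the weighted-degree filtration descends to $\cO(\bG)$, and that the weighted degree of a monomial dominates its number of factors, which holds because every generator has weight at least $1$. Finiteness of the generating set is exactly what converts the mere exhaustion property (``converging'') into a single linear bound $c\cdot d$.
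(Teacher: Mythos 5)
Your proof is correct, but it takes a genuinely different route from the paper. The paper first reduces to the case of nested embeddings: it chooses a closed embedding $GL_{N^\prime}\times GL_N\hookrightarrow GL_M$ with restrictions $\psi,\psi^\prime$, checks that $\cO(\bG)_{\leq e,\psi\circ\phi}=\cO(\bG)_{\leq e,\psi^\prime\circ\phi^\prime}$ by analyzing the image of $\cO(GL_M)_{\leq e}$ in $\cO(GL_N)\otimes\cO(GL_{N^\prime})$, and is thereby reduced to comparing $\phi$ with a composite $\phi^\prime=\psi\circ\phi:\bG\hookrightarrow GL_N\hookrightarrow GL_M$; there, one inclusion is essentially immediate and the other is obtained by bounding the filtration degrees of the finitely many generators $\psi^*(x_{i,j})$ and $\psi^*(\det(\ul x))$ by an explicit constant $R$. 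You bypass the joint-embedding reduction entirely: you work directly with both filtrations on $\cO(\bG)$, use Theorem \ref{thm:filtGL}(2) to span $\cO(\bG)_{\leq d,\phi}$ by images of monomials in the $N^2+1$ generators with at most $d$ factors (your count $(d-bN)+b\leq d$ is right for $N\geq 1$), place those finitely many generators in a single level $c$ of the $\phi^\prime$-filtration using only the exhaustion property, and propagate via multiplicativity. The core mechanism (finite generation plus multiplicativity of the weighted filtration, which descends along the surjective algebra maps $\cO(\bM_{N,N}\times\bG_m)\to\cO(GL_N)\to\cO(\bG)$) is the same in both arguments, but your version is shorter and logically leaner. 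Note that the multiplicativity you re-prove is exactly Proposition \ref{prop:mult-G}(1), which the paper states \emph{after} this proposition; since its proof is independent of Proposition \ref{prop:diff-emb} there is no circularity, and re-deriving it as you do keeps the argument self-contained. What the paper's detour buys is the identity $\cO(\bG)_{\leq e,\psi\circ\phi}=\cO(\bG)_{\leq e,\psi^\prime\circ\phi^\prime}$ and an explicit constant computed from the intermediate embedding, whereas your exhaustion argument produces $c$ non-constructively; for the qualitative statement as given, your approach fully suffices.
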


\begin{proof}
Using the closed embedding 
$$\bigoplus (\phi \times 1)\oplus (1 \times \phi^\prime): 
\bG \ \hookrightarrow \ GL_{N} \times GL_{N^\prime} \ \hookrightarrow \ GL_M, \quad
M = N + N^\prime,$$
we conclude it suffices to consider a closed embedding $\phi: \bG \hookrightarrow GL_N$ 
and compare $d \mapsto \cO(\bG)_{\leq d,\phi}$ with  $d \mapsto \cO(\bG)_{\leq d,\psi \circ \phi}$
where $\psi = \oplus \circ (id \times 1): GL_{N} \times GL_{N^\prime} \ \hookrightarrow \ GL_M$.
Since $\psi^*$ is surjective, each coordinate function $y_{s,t} \in \cO(GL_N)$ is the image
under $\psi^*$ of some $f_{s,t}(x_{i,j}) \in \cO(GL_M)$.  Denote by $c$ the maximum of the degrees of
$f_{s,t}, 1 \leq s,t \leq n$.  Then $\cO(\bG)_{\leq d,\phi} \subset \cO(\bG)_{\leq c \cdot d,\psi \circ \phi}$.

Denote by $c^\prime$ the maximum of the degrees of $\phi^*(x_{i,j}) \in \cO(GL_N)$.
Then $\cO(\bG)_{\leq d,\psi \circ \phi} \ \subset  \cO(\bG)_{\leq c^\prime\cdot d,\phi}$.

\vskip .2in 
%
%
%
\end{proof}

\vskip .2in


\section{Filtering $\bG$-modules using $\{ \cO(\bG)_{\leq d,j} \}$}
\label{sec:filt}

We state as a proposition the evident consequence of functoriality of $(-)_X$.

\begin{prop}
\label{prop:functorial}
For any ascending, converging sequence $\{ X_i \}$ of subspaces of $\cO(\bG)$
(for example, $\{ \cO(\bG)_{\leq d,\phi} \})$,   \ $\phi: M \to N$ \ of $\bG$-modules is an isomorphism if and only 
if  \ $(\phi)_{X_i}: M_{X_i} \to N_{X_i}$ is an isomorphism of $\bG$-modules for all $i$.
\end{prop}

\vskip .1in

Recall the Schur algebra $S(N,d)$, the dual space of the subspace  $\cO(\bM_{N,N};d)$
of $\cO(\bM_{N,,N})$ consisting of polynomials in the matrix coefficients $x_{i,j}$ which 
are homogeneous of degree $d$.
A module for $S(N,d)$ (equivalently, a comodule for $\cO(\bM_{N,N};d)$ ) is called a 
homogeneous polynomial representation of $GL_N$ of degree $d$.

\begin{ex}
\label{ex:Schur}
Let  $M$ be a homogeneous polynomial representation of $GL_N$ of degree $d$.
Then $M_{\cO(GL_N)_{\leq s}}$ equals $0$ if $s < d$ whereas  
$M_{\cO(GL_N)_{\leq s}} = M$ if $s \geq d$.

More generally, let $\phi: \bG \ \subset GL_N$ be a closed embedding of a linear 
algebraic group $\bG$ with the property  that \ 
$A(\bG) \ \equiv \ \cO\bG) \cap \cO(\bM_{N,N})$ can be written as a direct sum 
$\bigoplus_d A(\bG)_d$, where $A(\bG)_d = \cO(\bG) \cap \cO(\bM_{N,N})_d$ (for example, 
the classical orthogonal or symplectic groups).  If $M$ is an object of $CoMod(A(\bG)_d)
\subset Mod(\bG)$, then  $M_{\cO(\bG)_{\leq s,\phi}} = 0$ if $s < d$ and   
$M_{\cO(\bG)_{\leq s, \phi}} = M$ if $s \geq d$.
 See \cite[1.2]{Doty}.
\end{ex}

\vskip .1in

\begin{ex}
Give $\cO(\bG_a) = k[t]$ the evident filtration by degree (equal to that associated to the
embedding of $\phi: \bG_a \hookrightarrow GL_2$ as the unipotent radical of a Borel subgroup).
The subcoalgebra $\cO(\bG_a)_{\leq p^r-1,\phi} \subset \cO(\bG_a)$ is isomorphic 
to the coordinate algebra of $\bG_{a(r)}$; thus, the abelian category $CoMod(\cO(\bG_a)_{\leq p^r-1,\phi})$
is isomorphic to $Mod(\bG_{(r)})$ which in turn is isomorphic to the
category of modules for the elementary abelian $p$-group $(\bZ/p)^{\times r}$; this category 
is wild, if $r > 2$ or if $p > 2, r = 2$.

To give a vector space $M$ the structure of a $\bG_a$-module is equivalent to giving a
sequences of $p$-nilpotent operators $\psi_i: M \to M, i \geq 0$ which pair-wise commute
and which satisfy the condition that for each $m \in M$ there exists some $n_m$ such
that $\psi_i(m) = 0, i \geq n_m$.  Given such a $\bG_a$-module $M$, the $\bG$-submodule
$M_{\cO(\bG_a)_{\leq p^r-1,\phi}} \subset M$ consists of those $m \in M$ such that $\psi_i(m) = 0, 
i \geq r$.
\end{ex}

\vskip .1in

\begin{prop}
\label{prop:tensor-filt}
Let $\bG$ be a linear algebraic group equipped with the closed embedding $\phi: \bG \hookrightarrow GL_N$
and consider two $\bG$-module $M, M^\prime$.  Then 
$$M_{\cO(\bG)_{\leq d,\phi}} \otimes 
M^\prime_{\cO(\bG)_{\leq d^\prime,\phi}} \quad \subset \quad(M\otimes M^\prime)_ {\cO(\bG)_{\leq d+d^\prime,\phi}}.$$
\end{prop}

\begin{proof}
Since multiplication $\mu: \cO(GL_N) \otimes \cO(GL_N) \to \cO(GL_N)$ restricts to \\
$\cO(\bG)_{\leq d,\phi} \otimes \cO(\bG)_{\leq d^\prime,\phi} \ \to \ \cO(\bG)_{\leq d+d^\prime,\phi}$,
the proposition is a consequence of Proposition \ref{prop:prod-coalg}(1).  
\end{proof}

\vskip .1in

The following proposition explains how the ascending converging sequence for a Frobenius twist
$M^{(r)}, \ \{ (M^{(r)})_{\cO(\bG)_{\leq p^r\cdot d}} \}$ \
is determined by $\{ M_{\cO(\bG)_{\leq d}} \}$.

\begin{prop}
\label{prop:Frob-twist}
Assume that $\bG$ is defined over $\bF_{p^r}$ and that $M$ is a $\bG$-module with
coaction $\Delta_M: M \otimes O(\bG)$ also defined over $\bF_{p^r}$.  Then the $r$-th Frobenius twist $M^{(r)}$
of $M$ has coaction 
$$\Delta_{M^{(r)}} \ \simeq \  1_M \otimes (F^r)^*\circ \Delta_M: M \ \to \ M\otimes \cO(\bG) \ \to\ M \otimes \cO(\bG).$$

If $\phi:\bG \hookrightarrow GL_N$ is defined over $\bF_{p^r}$, then 
$$M_{\cO(\bG)_{\leq d}} \quad = \quad (M^{(r)})_{\cO(\bG)_{\leq p^r\cdot d}}.$$
\end{prop}

\begin{proof}
The identification of $\Delta_{M^{(r)}}$ is implicit in \cite[I.9.10]{J} following \cite[\S 1]{FS}.
Granted this, the second assertion follows from the fact that $(F^r)^*: \cO(\bG) \to \cO(\bG)$
sends $f({x_i,j}) \in \cO(GL_N)$ to $f^{p^r}(x_{i,j})$, multiplying the degree of each monomial
by $p^r$.
\end{proof}

\vskip .2in


\section{Mock Injective $\bG$-Modules}
\label{sec:mock}

A $\bG$-module $M$ for a linear algebraic group is called mock injective
if its restriction $M_{|\bG_{(r)}}$ to each Frobenius kernel $\bG_{(r)}$ is an injective $\bG_{(r)}$-module.
By the detection of injectivity property for support varieties for infinitesimal group
schemes (\cite{SFB2}, \cite{Pevt}), $M$ is mock injective if and only if the support variety
$\Pi(\bG_{(r)})_{M_{|\bG_{(r)}}}$ is empty for all $r > 0$.
 
If a mock injective $\bG$-module is not injective (as a $\bG$-module), then it is called
a proper mock injective $\bG$-module.  The existence of proper mock injectives was
first established in \cite{F18}.

The following list of properties of mock injective $\bG$-modules following easily from the 
exactness of $(-)_{\bG_{(r)}}: Mod(\bG) \to Mod(\bG_{(r)})$
and the corresponding properties for support properties for $\bG_{(r)}$-modules.  (See \cite[Prop 4.6]{F18}.)

\begin{prop}
\label{prop:property-mock}
Let $\bG$ be a  linear algebraic group.
\begin{enumerate}
\item
A $\bG$-module is mock injective if and only if its support variety $\Pi(G)_M$ 
(as defined in \cite{F23}) is empty.
\item
A directed colimit $\varinjlim_i M_i$ of mock injective $\bG$-modules is mock injective
\item
Let $0 \to M_1 \to M_2 \to M_3 \to 0$ be an exact sequence of $\bG$-modules.  If
two of $ M_1, M_2, M_3 $ are mock injective, then the third is also mock injective.
\item
If $\mathbb H \hookrightarrow \mathbb G$ is a closed embedding of linear algebraic
groups and $M$ is a mock injective $\bG$-module, then the restriction of
$M$ is a mock injective $\bH$-module.
\item
The tensor product of two mock injective $\bG$-modules is mock injective.
\end{enumerate}
\end{prop}

\vskip .1in

By Proposition  \ref{prop:property-mock}(1), support varieties offer no information 
which might distinguish non-isomorphic mock injectives.   This should be contrasted
to Proposition \ref{prop:functorial} concerning $M \mapsto M_{\cO(\bG)_{\leq d, \phi}}$.

We recall our first construction of proper mock injective $\bG$-modules, an interpretation of
results of Cline Parshall, and Scott concerning induced modules. (See \cite{CPS77}.)

\begin{prop} \cite[Prop 4.54]{F18}
\label{prop:first-mock}
Let $\bG$ be a linear algebraic group and $H \hookrightarrow \bG$ a closed subgroup.
Then the restriction $(k[\bG]^R)_{|H}$ to $H$ of the right regular representation of $\bG$ is
a mock injective $H$-module.  On the other hand, $(k[\bG]^R)_{|H}$ is an injective $H$-module
if and only if $\bG/H$ is an affine variety.
\end{prop}

\vskip .1in

The next construction of proper mock injectives, a summary of results of Hardesty, Nakano, and Sobaje 
in \cite[\S2]{HNS},  involves $\bG$-modules obtained by induction from finite subgroups
$\bG(\bF_p) \hookrightarrow \bG$.

\begin{prop} \cite[\S 2]{HNS}
\label{prop:HNS}
Let $\bG$ be a linear algebraic group $\bG$ defined over $\bF_q$ and $H \hookrightarrow \bG$
a finite subgroup scheme with the property that the Frobenius map $F^q: \bG \to \bG$ restricts
to an automorphism of $H$.  
\begin{enumerate}
\item
The exact functor
$$ind^{\bG}_H(-) \circ (-)_{|H}: (\bG\text{-}modules) \ \to \ (\bG{\text -}modules) ,\quad
M \mapsto ind^{\bG}_H(M_{|H}) \ = \ M\otimes ind^{\bG}_H k$$
takes values in mock injective $\bG$-modules.  
\item
If every simple  $H$-module is the restriction of a $\bG$-module, then $ind^{\bG}_H(M_{|H})$ is an injective
$\bG$-module if and only if $M_{|H}$ is an injective $H$-module.
\end{enumerate}
\end{prop}

\vskip .1in
Applying an induction argument on the length of composition series for $M$, we see that
Proposition \ref{prop:HNS}(2)  provides numerous concrete examples of proper mock injective modules.

\begin{ex}
\label{ex:abundant}
Let $\bG$ be as in Proposition \ref{prop:HNS} and set $H \hookrightarrow \bG$ equal to 
$\bG(\bF_{p^s})$ for some $s \leq d$.   
\begin{enumerate}
\item
Assume that  $\bG$ is semi-simple and that $M$ is a finite dimensional $\bG$-module.  
If the composition series for $M$ 
has no irreducible constituent $S_\lambda$ with $\lambda$ of the form $(p^s-1)\rho + p^s\mu$, 
then $ind^{\bG}_H(M_H)$ is a  proper mock injective $\bG$-module.
\item
If $\bG$ is unipotent and $M$ is a finite dimensional $\bG$-module, then $ind^{\bG}_H(M_{|H}))$
is a proper mock injective $\bG$-module.
\end{enumerate}
\end{ex}

\vskip .2in

\section{Cofinite $\bG$-modules  }
\label{sec:growth}

In this section, we investigate cofinite $\bG$-modules, a class of (necessarily
countable) $\bG$-modules which seem somewhat amenable to study.

\vskip .1in

\begin{defn}
\label{defn:cofinite}
Let $\bG$ be an affine group scheme.   We define a $\bG$-module $M$
to be cofinite if \ $M_X$ \ is finite dimensional for every finite dimensional 
subspace $X \subset \cO(\bG)$.

This condition is equivalent to the condition that each $M_{X_i}$ is a finite dimensional $\bG$-module
for some ascending converging sequence $\{ X_i \}$ of finite dimensional subspaces of $\cO(\bG)$.
\end{defn}

\vskip .1in

We establish various properties of cofinite $\bG$-modules.   We caution the reader 
that $M \otimes k[\bG]^R$ is not cofinite for any infinite dimensional $\bG$-module
Indeed, $(M\otimes k[\bG]^R)_{\cO(\bG)_{\leq 0}}$ can be identified with the 
underlying vector space of $M$ with trivial $\bG$-action.
 
 \begin{prop}
 \label{prop:cofinite}
 Let $\bG$ be an affine group scheme, and let $M, \ E, \ N$ be $\bG$-modules.
\begin{enumerate}
\item 
If $0 \to M \to E \to N \to 0$ is exact and if $M,  \ N$ are cofinite, then $E$ is also cofinite.
\item 
If $\bG = GL_N$ is a linear algebraic group (and more generally if $\bG$ admits an 
embedding \ $\theta: \bG \hookrightarrow \GL_N$ as in Example \ref{ex:Schur}), and if $M$ 
is finite dimensional,
then $M \otimes N$ is cofinite if and only if $N$ is cofinite.
\item 
Any direct summand of a cofinite $\bG$-module is also cofinite.
\item  
If $M$ is finite dimensional, then $M$ embeds in an injective $\bG$-module which is cofinite.
\end{enumerate}  
\end{prop}

\begin{proof}
The left exactness of $(-)_X$ implies assertions (1) and (3).   Assertion (4) is justified by 
the embedding $M \subset M\otimes k[\bG]^R$ together with the observation that
$M\otimes k[\bG]^R$ is isomorphic to $M^{tr}\otimes k[\bG]^R$.

We prove assertion (2) for $\bG = GL_N$; with notational changes, the same proof applies
to $\bG$ admitting \ $\theta: \bG \hookrightarrow \GL_N$ as in Example \ref{ex:Schur}).
Let $e$ be chosen such that $M = M_{\cO(GL_N)_{\leq e}}$.  It suffices to prove
for all $d \geq e$ that 
\begin{equation}
\label{eqn:sub}
(M \otimes N)_{\cO(GL_M)_{\leq d-e}} \ \subset \ M\otimes N_{\cO(GL_N)_{\leq d}}.
\end{equation}
To prove (\ref{eqn:sub}), we proceed by contradiction, assuming that there exists
some $\sum m_{\alpha} \otimes n_{j,\beta} \in (M \otimes N)_{\cO(GL_N)_{\leq d-e}} $ with the 
property that 
\begin{equation}
\label{eqn:sub2}\Delta(\sum m_i \otimes n_j) \ = \ \sum_{\alpha,\beta} m_\alpha \otimes n_\beta \otimes f_{\alpha,i}\cdot f_{\beta,j}
\ \in \ M\otimes N \otimes \cO(GL_N)
\end{equation}
with some $f_{\alpha,i}\cdot f_{\beta,j} \notin \cO(GL_N)_{\leq d}$.
Here,  $\Delta(m_i) = \sum_\alpha m_\alpha \otimes f_{\alpha,i}$
and $\Delta(n_j) = \sum_\beta n_\beta \otimes f_{\beta,j}$, $\{ m_\alpha \}$ is a basis for $M$, 
$\{ n_\beta \}$ is a basis for $N$, and each $f_{\alpha,i}, \ f_{\beta,j}$ is an element of a basis 
$\{ f_\gamma \}$ of $\cO(GL_N)$.  Let $D$ be the largest integer such that some 
$f_{\beta,j} \notin \cO(GL_N)_{\leq D-1}$ and let $g$ denote a choice of such a $f_{\beta,j}$.  Observe that each $f_{\alpha,i}\cdot g
\notin \cO(GL_N)_{\leq d-e}$ since $f_{\alpha,i} \in \cO(GL_N)_{\leq e}$.  To obtain a contradiction, 
it suffices to verify the coefficient of 
$\sum_i m_\alpha \otimes n_\beta \otimes f_{\alpha,i}\cdot g$ in (\ref{eqn:sub2}) is non-zero for some
$m_{\alpha} \otimes n_{\beta} $.  The vanishing of this coefficient is a single linear condition on $\{ f_{\alpha,i}\cdot g\}$.
If this vanishing occurs, it will fail if we replace $n_\beta$ by $c \cdot n_\beta$ and 
$f_{\beta,j}$ by $c^{-1}\cdot f_{\beta,j}$.
\end{proof}

\vskip .in

In the following examples, we see that 
the category \ $CoFin(\bG) \ \subset \ Mod(\bG)$ \ of cofinite $\bG$-modules
is not closed upon quotients.

\begin{ex}
\label{ex:counter}
 Let $\bG$ be a semi-simple linear algebraic group over $k$.  
 Choose a non-trivial extension
 $0 \to M \to E \to k \to 0$ corresponding
 to a non-zero class in $H^1(\bG,M)$ with $M$ finite dimensional.  By 
 Andersen's theorem (see \cite[Thm II.10.16]{J}), the $r$-th Frobenius twist
 of this class in $H^1(\bG,M^{(r)})$ is also non-zero, so that the short exact
 sequence \ $0 \to M^{(r)} \to E^{(r)} \to k \to 0$ is non-split.  Thus, 
 $\bigoplus_{r\geq 0} E_{\lambda,\mu}^{(r)}$ is cofinite, but has as quotient
 $\bigoplus_{r \geq 0} k$ which is not cofinite.
 \end{ex}

\vskip .1in

The following invariant $\gamma(\bG)_M $ of a cofinite $\bG$-module is only one of many similar invariants
one might define.

\begin{defn}
\label{defn:exp-growth}
Let $\bG$ be a linear algebraic group equipped with a closed embedding $\phi: \bG \hookrightarrow GL_N$
and let $M$ a cofinite $\bG$-module.  We say that $M$ has
cofinite type $\gamma(\bG)_M$ equal to $(c,\epsilon)$ if
\begin{equation}
\label{eqn:exp-growth}
 \varinjlim_d \frac{dim(M_{\cO(\bG)_{\leq d\phi}})}{d^\epsilon} \quad = \quad c.
 \end{equation}
 For such a $\bG$-module $M$, we say that $M$ has polynomial growth of degree $\epsilon$.
 
 Thus, in view of (\ref{eqn:advantage3}), Proposition \ref{prop:O(GLN)} tells us that the $\cO(GL_N)$-module 
 $k[GL_N]^R$ has cofinite type $(\frac{1}{(N^2)!},N^2)$ and Example \ref{ex:unipotent}
tells us that the $\cO(\bU_N)$-module $k[\bU_N]^R$ has cofinite type $(\frac{1}{(N(N-1))!},{N(N-1)}$.
\end{defn}

\vskip .1in

The following example include the observation that cofinite type 
differentiates the mock injectives of Propositions \ref{prop:first-mock}.

\begin{ex}
\label{ex:Ga}
Give $\cO(\bG_a) = k[t]$ the evident filtration by degree (equal to that associated to the
embedding of $\phi:\bG_a \hookrightarrow GL_2$ as the unipotent radical of a Borel subgroup).

The cofinite type of the injective $\bG_a$-module $k[t]^L = k[t]^R = k[t]$  equals $(1,1)$,
whereas the cofinite type of the mock injective $\bG_a$-module $ind_{\bG_a(\bF_q)}^{\bG} k$ equals $(1,1/q)$.

On the other hand, the mock injective $\bG_a$-module $(k[GL_N])_{|\bG_a}$ is not cofinite as
a $\bG_a$-module.

In contrast, the $\bG_a$-submodule $P \ \equiv \{1, t^{p^i}\} \subset k[\bG_a]^R = k[t]$ 
of primitive elements
satisfies $dim(P_{\cO(\bG_a)_{\leq p^r}}) \ = \ r+1$ and thus does not have polynomial growth.
\end{ex}

\vskip .1in

\begin{ex}
\label{ex:GL-poly}
Let $P$ be a polynomial representation of $GL_N$ of dimension $n$ which is homogeneous 
of degree $s$ and let $M = S^*(P)$ be the symmetric algebra on $P$ 
viewed as a $GL_N$-module.  Since the coaction of $\cO(GL_N)$ on $P$ factors
through $\cO(\bM_{N,N})$, \ $M$ is a graded $\cO(\bM_{N,N})$-module with 
$M_{\cO(GL_N)_{\leq d\cdot s}}  \ = \ M_{\cO(M_{N,N})_{d\cdot s}}  \ = \ S^d(P)$.
Thus, $dim(M_{\cO(GL_N)_{\leq d\cdot s}}) \ = \ \binom{d+n}{n}$, which as
a function of $d$ 
differs from $d \mapsto \frac{d^d}{s^n\cdot n!}$ by an error term of degree (in $d$)
less than $N$.

Thus, $P$ is a cofinite $GL_N$ module with $\gamma(GL_N)_P = (\frac{1}{s^n\cdot n!},n)$.
\end{ex}

\vskip .1in

As we see below, the polynomial growth of a cofinite $\bG$-module $M$ is independent 
of the choice of closed embedding $\phi:\bG \hookrightarrow GL_N$.

\begin{prop}
\label{prop:equal-growth}
Let $\bG$ be an affine group scheme and $M$ a cofinite $\bG$-module.  Consider
two closed embeddings $\phi: \bG \hookrightarrow GL_N
\ \phi^\prime: \bG \hookrightarrow GL_{N^\prime}$.  If  $\gamma(\bG,\phi)_M \ = \ (\epsilon,c)$ and  
if $\gamma(\bG,\phi^\prime)_M \ = \ (\epsilon^\prime,c^\prime)$,
 then $\epsilon = \epsilon^\prime$.
 \end{prop}
 
 \begin{proof}
 If $\phi, \psi: \bN \to \bN$ are sequences of polynomial growth $e,f$ 
respectively, then $\phi\circ\psi$ has polynomial growth $e\cdot f$.
In particular, given an ascending converging sequence $n \mapsto \phi(n)$
of polynomial growth $e$, then a subsequence $n \mapsto \phi (\psi(n))$ with
$\psi(n)$ growing linearly in $n$ also has growth $e$

Thus, the proposition follows by appealing to Proposition \ref{prop:diff-emb}. 
 \end{proof}

\vskip .1in

We next compute the degree of polynomial growth of $d \mapsto dim(\cO(\bG)_{\leq d,\phi})$.
 
\begin{prop}
\label{prop:growthbG}
Let $\phi: \bG \hookrightarrow GL_N$ be a smooth, closed embedding of affine group schemes, 
and let $\fg$ denote the Lie algebra of $\bG$.  Then the polynomial growth of 
\ $d \mapsto dim(\cO(\bG)_{\leq d,\phi})$ equal to that of $d \mapsto d^{dim(\fg)}$.
\end{prop}

\begin{proof}
As seen in the proof of Proposition \ref{prop:O(GLN)},  $d\mapsto dim(\cO(GL_N)_{\leq d}$ has polynomial
growth of degree $N^2$ and equals that of $d \mapsto \cO(M_{N,N})_{\leq d}$.
Since $\cO(M_{N,N})_{\leq d}$ maps isomorphically to the truncated local ring $\cO(M_{N,N})_{(id)}/\fm_{id}^{d+1}$
which maps onto $\cO(\bG)/\fm_{\bG}^{d+1}$, we conclude that 
the image of $\cO(\bM_{N,N})_{\leq d} \to \cO(\bG)$ has growth 
at least $d\mapsto \cO(\bG)/\fm_{\bG}^{d+1}$ which has polynomial growth equal to 
that of $d \mapsto d^{dim(\fg)}$.

Choose $\ell$ to be the least positive integer such that  $\cO(\bG)_{\leq 1,\phi}
\to \cO(\bG)/m_\bG^\ell$ is injective.  Then $\cO(\bG)_{\leq d,\phi}$ maps injectively to 
$\cO(\bG)/m_\bG^{\ell \cdot d}$, so that the argument of the proof of Proposition 
\ref{prop:equal-growth} implies that the growth of $d\mapsto \cO(\bG)_{\leq d,\phi}$
is not greater than $d \mapsto d^n$.
\end{proof}
 
 \vskip .1in
 
 As a consequence of Proposition \ref{prop:growthbG}, we obtain the following.

\begin{prop}
\label{prop:left-right}
Let $\phi: \bG \hookrightarrow GL_N$ be a smooth, closed embedding of affine group schemes, 
and let $\fg$ denote the Lie algebra of $\bG$.
The $G$-modules $k[\bG]^R, \ k[\bG]^L, \ k[\bG]^{Ad}$\ each have polynomial
growth of degree equal to $dim(\fg)$ with respect to $\{ \cO(\bG)_{\leq d,\phi} \}$.
\end{prop}

\begin{proof}
The growth of $k[\bG]^R$ with respect to $\{ \cC(\bG)_{\leq d,\phi} \}$ equals $dim(\fg)$ by
Proposition \ref{prop:growthbG}.

Consider the anitipode $\sigma: \cO(GL_N) \to \cO(GL_N)$, the map on coordinate algebras induced
by $(-)^{-1}: GL_N \to GL_N$.  For any invertible $N\times N$-matrix $A$, Kramer's rule tell us that $A$
has inverse $B = (b_{i,j})$, where $b_{i,j} = (-1)^{i+j} det(A_{i,j})\cdot det(A)^{-1}$ where
$A_{i,j}$ is the minor of $A$ at $(i,j)$.  Thus, $\sigma(x_{i,j}) \in \cO(GL_N)$ is the product of an $N-1$ degree
polynomial $(-1)^{i+j}det(\{ x_{s,t}, s \not=i, y\not= j \})$ and the function $det(\{x_{i,j})^{-1}$ (which is given filtration degree
$N$).  Consequently, the algebra homomorphism $\sigma: \cO(GL_N)\to \cO(GL_N)$ multiplies filtration 
degree (with respect to $\{ \cO(\GL_N)_{\leq d} \}$) by $2N-1$.
The coaction for $k[GL_N]^L$  is the composition 
$$\tau \circ (\sigma_{GL_N} \otimes id_{\cO(GL_N)}) \circ \Delta_{GL_N}: \cO(GL_N) \ \to \ \cO(GL_N) \otimes \cO(GL_N)
\to \ $$
$$\cO(GL_N) \otimes \cO(GL_N)\ \to \ \cO(GL_N) \otimes \cO(GL_N),$$
where $\tau$ exchanges tensor factors.
Thus, $dim((k[GL_N]^L)_{\cO(GL_N)_{\leq (2N-1)d}} )\ = \ 
dim(\cO(GL_N)_{\leq d})$.  
We conclude that \ $d \mapsto (k[GL_N]^L)_{\cO(GL_N)_{\leq d}}$
has polynomial growth of degree $N^2$ as in the proof of Proposition \ref{prop:equal-growth}.

The coaction determining the comodule structure of $k[GL_N]^{Ad}$ is the composition 
$$\mu_{1,3} \circ (\sigma_{GL_N}\otimes 1 \otimes 1) \circ (1\circ \Delta_{GL_N}) \circ \Delta_{GL_N}:
\cO(GL_N) \ \to \ \cO(GL_N) \otimes \cO(GL_N) \ \to \ $$
where $\mu_{1,3}$ multiplies the first and third tensor factors, then exchanges the two remaining
factors.  (See \cite[I.2.8(70]{J}.)
If we write this coaction as $\Delta_{Ad}(x_{i,j}) =  \sum_{s,t} x_{s,t} \otimes f_{s,t}^{i,j}$,
then we see that each  $f_{s,t}^{i,j}$ is a product of a function of filtration degree $\leq 2N-1$
and a function of degree 1, thus $f_{s,t}^{i,j}$ has filtration degree $\leq 2N$.  As for $k[GL_N]^L$, we
conclude that \ $d \mapsto (k[GL_N]^{Ad})_{\cO(GL_N)_{\leq d}}$
has polynomial growth $N^2$.

Since the restriction $\phi^*: \cO(GL_N) \to \cO(\bG)$ is a surjective map of Hopf algebras 
and since $\cO(\bG)_{\leq d,\phi}$ is defined to be $\phi^*(\cO(\bG)_{\leq d})$, we may 
apply $\phi^*$ to the above arguments
for $GL_N$ to conclude the corresponding statements for $\bG$. 
\end{proof}

\vskip .1in

We conclude our  examples with a simple example of a cofinite $GL_N$-module with growth not bounded
by any polynomial

\begin{ex}
\label{ex:growth-exp}
Let $V$ be the natural representation of $GL_N$ of dimension $N$, a 
polynomial representation homogeneous of degree $1$ (see Example \ref{ex:GL-poly}).  Set $M$ to be 
\ $M \ \equiv \ \bigoplus_{n \geq 0} (V^{(n)})^{\oplus n!}.$ \  Then 
$$dim(M_{\cO(GL_N)_{\leq p^r}} ) \ = \  \sum_{0 \leq s \leq r} N\cdot s! ,$$
so that $d \mapsto dim(M_{\cO(GL_N)_{\leq d}})$ grows faster than any polynomial in $d$. 
\end{ex}

\vskip .2in


\section{General questions}
\label{sec:questions}

We mention a few of the many questions which arise when considering 
filtrations of $\bG$-modules.

\vskip .1in

\begin{question}
\label{ques:base}
To what extent does our discussion of $\bG$-modules for an affine group scheme of 
finite type over a field $k$ of positive characteristic extend to group schemes of finite
type over a (commutative) Noetherian $k$-algebra?
\end{question}

\vskip .1in

\begin{question}
\label{ques:Hopf}
Much of the discussion of this paper applies to Hopf algebras $A$ of countable dimension 
over $k$ which are more general than those of the form $\cO(\bG)$ (see \cite{smontgom}), 
although  a braiding is required at some points.   Does a similar consideration of 
ascending converging sequences of subspaces lead to interesting modules for various 
Hopf algebras which are not co-commutative?
\end{question}

\vskip .1in
\begin{question}
\label{ques:identify-cat}
Let $mod(\bG,X) \ \subset \ mod(\bG)$ denote the abelian category of finite 
dimensional $X$-comodules.  Can one give a useful description of the Balmer
spectrum of the triangulated category $D^b(mod(\bG,X))$ for various subspaces $X \subset \cO(\bG)$?
\end{question}

\vskip .1in
\begin{question}
\label{ques:filt-deg}
Let $\bG$ be a reductive group.  How are the filtrations $\{ \cO_{\pi_n}(\bG) \}$ 
introduced by Jantzen in \cite[App A]{J}
and $\{ \cO(\bG)_{\leq d} \}$ related?
\end{question}

\vskip .1in
\begin{question}
\label{ques:Hoch}
For $\bU$ a unipotent linear algebraic group, can we utilize the ascending converging sequences
of subcoalgebras of $\cO(\bU)$ to enable computations
of the Hochschild cohomology of $\bU$?  (See \cite{F19}.)
\end{question}

\vskip .1in
\begin{question}
\label{ques:realize}
Are there constraints on what positive numbers $d$ can be the degree of polynomial
growth of an indecomposable, cofinite $GL_N$-module with respect to $\{ \cO(GL_N)_{\leq d} \}$?
\end{question}

\vskip .2in


\end{document}